\declaretheorem[name=Theorem, numberwithin=section]{theorem}
\declaretheorem[name=Proposition, sibling=theorem]{proposition}
\declaretheorem[name=Observation, sibling=theorem]{observation}
\declaretheorem[name=Question, style = remark, sibling=theorem]{question}
\newenvironment{usethmcounterof}[1]{%
\theorem}{\endtheorem\addtocounter{theorem}{-1}}
\newcounter{cas}
\newtheoremstyle{assert}
  {.5\baselineskip±.2\baselineskip}   
  {.5\baselineskip±.2\baselineskip}   
  {\itshape}  
  {0pt}       
  {\bfseries} 
  {.}         
  {5pt plus 1pt minus 1pt} 
  {(\thmnumber{#2})}          
\theoremstyle{assert}
\newtheorem{as}[cas]{}
\newcounter{scas}
\newtheoremstyle{sassert}
  {.5\baselineskip±.2\baselineskip}   
  {.5\baselineskip±.2\baselineskip}   
  {\itshape}  
  {0pt}       
  {\itshape} 
  {.}         
  {5pt plus 1pt minus 1pt} 
  {(\thmnumber{#2})}          
\theoremstyle{sassert}
\newtheorem{sas}[scas]{}
\renewcommand{\ge}{\geqslant}
\renewcommand{\geq}{\geqslant}
\renewcommand{\le}{\leqslant}
\renewcommand{\leq}{\leqslant}
\newcommand{\footremember}[2]{%
\footnote{#2}
\newcounter{#1}
\setcounter{#1}{\value{footnote}}%
}
\newcommand{\footrecall}[1]{%
\footnotemark[\value{#1}]%
}
\def\cqedsymbol{\ifmmode$\lrcorner$\else{\unskip\nobreak\hfil
\penalty50\hskip1em\null\nobreak\hfil$\lrcorner$
\parfillskip=0pt\finalhyphendemerits=0\endgraf}\fi} 
\newcommand{\cqed}{\renewcommand{\qed}{\cqedsymbol}}
\DeclareMathOperator{\ir}{ir}
\DeclareMathOperator{\mir}{mir}
\newcommand{\sst}[2]{\left\{#1\,:\,#2\right\}}
\title{Revisiting a theorem by Folkman on graph colouring\thanks{Authors MB and
PC have been supported by the ANR Project DISTANCIA (ANR-17-CE40-0015) operated by the French National Research Agency (ANR). PC has been supported by the ANR Project HOSIGRA (ANR-17-CE40-0022) operated by the French National Research Agency (ANR) and by INRIA GANG Project team. GJ is supported by an ARC grant from the Wallonia-Brussels Federation of Belgium.
OD, AL and VL have been supported by ANR Project \textsc{GraphEn} (ANR-15-CE40-0009)
operated by the French National Research Agency (ANR).%
}}
\author{%
  Marthe Bonamy\footnote{Service public français de la recherche, CNRS, LaBRI, Université de Bordeaux, France. \texttt{marthe.bonamy@u-bordeaux.fr}.}
\and
  Pierre Charbit\footnote{Service public français de la recherche, Université Paris Diderot -- IRIF, Paris, France. \texttt{charbit@irif.fr}.}
\and
  Oscar Defrain\footremember{auvergne}{Service public français de la recherche, Université Clermont Auvergne, France. \texttt{\{oscar.defrain, aurelie.lagoutte, vincent.limouzy, lucas.pastor\}@uca.fr}.}
\and
  Gwenaël Joret\footnote{Département d'Informatique, Université Libre de Bruxelles, Brussels, Belgium. \texttt{gjoret@ulb.ac.be}.}
\and
  Aurélie Lagoutte\footrecall{auvergne}
\and
  Vincent Limouzy\footrecall{auvergne}
\and
  Lucas Pastor\footrecall{auvergne}
\and
  Jean-Sébastien Sereni\footnote{Service public français de la recherche, Centre National de la Recherche Scientifique, ICube (CSTB), Strasbourg, France. \texttt{sereni@kam.mff.cuni.cz}.}
}
\begin{document}

\maketitle

\begin{abstract}%
We give a short proof of the following theorem due to Jon H.~Folkman (1969):
The~chromatic~number~of~any~graph is at most~$2$ plus the maximum over all
subgraphs of the difference between the number of vertices and twice the
independence number.
\end{abstract}

\section{Introduction}\label{sec:intro} 

Independent sets in a graph count among the most studied objects of graph
theory, both for their theoretical and practical appeal.  An \emph{independent
set} in a graph is a set of vertices no two of which are adjacent.  This notion
is at the heart of graph colouring, where one tries to find a partition of the
vertex set of a graph in the smallest possible number of independent sets, this
number being the \emph{chromatic number} of the graph. It follows that
every graph~$G$ with chromatic number~$k$ has an independent set of size at
least~$|V(G)|/k$.  We investigate what kind of converse statement could be
true.  As is usual, we write~$\chi(G)$ for the chromatic number of~$G$
and~$\alpha(G)$ for the \emph{independence number} of~$G$, that is, the size of
a largest independent set in~$G$. Since we study the independence number of a
graph in relation to the number of its vertices, it is useful to define the
\emph{independence ratio}~$\ir(G)$ of a graph~$G$ (with at least one vertex) to
be~$\frac{\alpha(G)}{|V(G)|}$, and the \emph{minimum
independence ratio}~$\mir(G)$ to be~$\min\sst{\ir(H)}{\text{$H$ induced
subgraph of~$G$ with at least one vertex}}$.  (The inverse of the minimum independence ratio is
sometimes called the Hall ratio.) Let us start with a straightforward
observation. Every graph with chromatic number greater than~$2$ contains an
induced odd cycle, and the independence number of an odd cycle is less than
half the number of its vertices.  The contrapositive statement reads as follows.

\begin{observation}\label{obs-bip}
If every induced subgraph~$H$ of a graph~$G$ has independence ratio
at least~$\frac12$, then~$G$ has chromatic number at most~$2$.
\end{observation}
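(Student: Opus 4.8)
The plan is to prove the contrapositive: assuming $\chi(G)\ge 3$, I will exhibit an induced subgraph $H$ of $G$ with $\ir(H)<\tfrac12$. Since a graph is $2$-colourable precisely when it is bipartite, the hypothesis $\chi(G)\ge 3$ tells us that $G$ is not bipartite, hence contains an odd cycle. I would then pick an odd cycle $C$ of $G$ of \emph{minimum length} and take $H$ to be the subgraph of $G$ induced by $V(C)$.

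The only step requiring an argument is checking that $H$ equals $C$, that is, that the chosen cycle is chordless. This is exactly where minimality is used: a chord of $C$ would split $C$ into two shorter cycles whose lengths add up to $|V(C)|+2$, so one of the two is odd and strictly shorter than $C$, contradicting the choice of $C$. Hence $H=C$ is an induced odd cycle of $G$.

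It then remains to bound $\ir(H)$. Writing $|V(C)|=2k+1$ with $k\ge 1$ (an odd cycle has at least three vertices), any independent set in $C$ omits at least one vertex out of every two consecutive ones, so $\alpha(C)=k$ and $\ir(H)=\frac{k}{2k+1}<\frac12$. This contradicts the assumption that every induced subgraph of $G$ has independence ratio at least $\frac12$, which proves the statement. I expect the chord-splitting argument producing the induced odd cycle to be the only genuine step; everything else is immediate (one could instead argue by induction on $|V(G)|$, but extracting a shortest odd cycle is cleaner).
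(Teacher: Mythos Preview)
Your proof is correct and follows essentially the same approach as the paper. The paper simply states, just before the observation, that ``every graph with chromatic number greater than~$2$ contains an induced odd cycle, and the independence number of an odd cycle is less than half the number of its vertices''; you have merely supplied the standard shortest-odd-cycle argument for the first fact and the easy count for the second, which the paper takes for granted.
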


One could try to generalise \Cref{obs-bip} in several ways. For
instance, what about replacing the constant~$2$ by some larger integer~$k$?
This would yield an incorrect statement, as for each integer~$k\ge3$, there
is a graph with chromatic number greater than~$k$ and minimum independence ratio at least~$\frac{1}{k}$.
Indeed, let~$M_k$ be the $k^\text{th}$ Mycielski graph (so~$M_2$ is a cycle of length~$5$).
Then~$\chi(M_k)=k+1$ and, as is well known, every proper subgraph of~$M_k$ has chromatic number
at most~$k$, from which one infers that~$\mir(M_k)\ge\frac{1}{k}$ whenever~$k\ge3$.

Let us point out here that the exact value of the minimum independence ratio of
Mycielski graphs seems to be unknown. In~2006, Cropper, Gyarfás and
Lehel~\cite{CGL06} proved that every triangle-free graph is an induced subgraph
of a Mycielski graph and, using results on Ramsey numbers~\cite{AKS80,Kim95}
and on the fractional chromatic number of Mycieslki graphs~\cite{LPU95}, they
inferred the existence of two positive real numbers~$c_1$ and~$c_2$ such that
for every integer~$m\ge3$,
\[ c_1\cdot\frac{\sqrt{\log m}}{m} \le \mir(M_s) \le c_2\cdot\frac{\log m}{m}.\]
Here~$s$ is one less than the Ramsey number~$R(3,m)$, which is the largest integer~$n$
such that there exists an $n$-vertex triangle-free graph with independence number less than~$m$.
As is well known,~$R(3,m)=\Theta(m^2/\log m)$ (the lower bound was established by Kim~\cite{Kim95}
in~1995, while the upper bound had been proved fifteen years before by Ajtai, Komlós and Szemerédi~\cite{AKS80};
simpler proofs for the lower bound along with improvements of the multiplicative constant
were found subsequently~\cite{AKS81,Gri83,She91}).

Another way is to look for approximate generalisations of \Cref{obs-bip}.
Let us say that a graph is \emph{half-stable} if its independence ratio is at least one half.
\begin{question}\label{que-half}
Let~$k$ be a non-negative integer and~$G$ a graph. Assume that for every induced
    subgraph~$H$ of~$G$, there exists a set~$Y\subseteq V(H)$ of at most~$k$
    vertices such that~$H-Y$ is half-stable. Is it true that there
    exists a subset~$X\subseteq V(G)$ of at most~$k$ vertices such that
    $G-X$ is bipartite?
\end{question}
\noindent
Again this turns out to be false: for $k=1$, consider the graph obtained
from the cycle~$v_1v_2v_3v_4v_5v_6$ of length~$6$ by adding the triangle
$v_1v_3v_5$ (see Figure~\ref{fig:CEque-half}). 

\begin{figure}[h]
  \centering
  \begin{tikzpicture}
    \tikzstyle{whitenode}=[draw,circle,fill=white,minimum size=9pt,inner sep=0pt]
    \tikzstyle{blacknode}=[draw,circle,fill=black,minimum size=7pt,inner sep=0pt]
    \tikzstyle{rednode}=[draw,circle,fill=red,minimum size=9pt,inner sep=0pt]
    \tikzstyle{greennode}=[draw,circle,fill=green,minimum size=9pt,inner sep=0pt]
    \tikzstyle{bluenode}=[draw,circle,fill=black!20!blue,minimum size=9pt,inner sep=0pt]
    \tikzstyle{texte}=[minimum size=0pt,inner sep=0pt]
 \draw (0,0) node[blacknode] (a) {}
-- ++(30+60:1cm) node[blacknode] (b) {}
-- ++(30+2*60:1cm) node[blacknode] (c) {}
-- ++(30+3*60:1cm) node[blacknode] (d) {}
-- ++(30+4*60:1cm) node[blacknode] (e) {}
-- ++(30+5*60:1cm) node[blacknode] (f) {};
\draw (f) -- (a) -- (c) -- (e) -- (a);
  \end{tikzpicture}
  \caption{A negative answer to Question~\ref{que-half} for $k=1$.}\label{fig:CEque-half}
\end{figure}
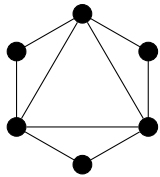

A more careful question was raised by Erd\H{o}s and Hajnal.
A graph satisfying the hypothesis of \Cref{que-half} has been
called \emph{$k$-near-bipartite} in the literature~\cite{Ree99}.
Erd\H{o}s and Hajnal conjectured (see~\cite{Gya97}) the existence of
a function~$g\colon\mathbf{N}\to\mathbf{N}$ such that every $k$-near-bipartite
graph~$G$ contains a set~$X$ of at most~$g(k)$ vertices such that~$G-X$ is bipartite.
This conjecture was confirmed by Reed~\cite{Ree99} in~1999.

Here is yet another way to weaken \Cref{que-half}: it was raised by Erd\H os
and Hajnal~\cite{ErHa66} and gave rise to a ``deep theorem'' (to quote
Gyárfás~\cite{Gya97}) demonstrated by Folkman~\cite{Fol69} in~1969.

\begin{restatable}[Folkman~\cite{Fol69}]{theorem}{folkman}\label{th:folkman}
Let~$k$ be a non-negative integer and~$G$ a graph.
    If for every induced subgraph~$H$ of~$G$, there exists a set~$Y\subseteq
V(H)$ of at most~$k$ vertices such that~$H-Y$ is half-stable,
then~$\chi(G)\leq k+2$.
\end{restatable}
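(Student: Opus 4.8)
First I would restate the hypothesis in a cleaner form. Say a graph $G$ is \emph{$k$-good} if every induced subgraph $H$ has a set $Y$ of at most $k$ vertices with $H-Y$ half-stable. The statement "$H-Y$ is half-stable" means $\alpha(H-Y)\ge |V(H-Y)|/2$, i.e.\ $H-Y$ has a vertex partition into an independent set and a (not necessarily perfect, but size-balanced) matching-coverable part — more usefully, a graph is half-stable iff it has a spanning subgraph that is a disjoint union of edges and isolated vertices covering at least half the vertices by the independent side; equivalently $|V(H-Y)| - 2\alpha(H-Y)\le 0$. So the hypothesis is precisely that $\max_{H\subseteq G}\big(|V(H)|-2\alpha(H)\big)\le k$, which matches the abstract. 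I'd want the induction to be on $|V(G)|$, aiming to $(k+2)$-colour $G$.

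**The key structural step.** The natural move: since $G$ itself is $k$-good, take $Y\subseteq V(G)$, $|Y|\le k$, with $G-Y$ half-stable. If I can $2$-colour $G-Y$, I win by giving each vertex of $Y$ its own colour. But $G-Y$ half-stable does \emph{not} imply bipartite (that is exactly what Observation~\ref{obs-bip} warns about, and why Question~\ref{que-half} fails). So instead I would try to pull out a large independent set and recurse. Let $I$ be a maximum independent set of $G-Y$, so $|I|\ge |V(G-Y)|/2\ge (|V(G)|-k)/2$. Colour $I$ with colour $1$ and try to $(k+1)$-colour $G-I$ by induction — but $G-I$ need not be $(k-1)$-good, so a crude induction on $k$ won't close either. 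The real content must be a more delicate choice: rather than any maximum independent set, choose $I$ together with $Y$ so that $G - (I\cup Y)$ is \emph{again half-stable after removing at most $k$ vertices from within it in a controlled way}, or — following the likely intent — peel off independent sets one at a time, each time using half-stability to guarantee the residue shrinks by a constant factor while the "deficiency" parameter $\max_H(|V(H)|-2\alpha(H))$ never increases (it is monotone under taking induced subgraphs).

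**How I expect the induction to run.** Concretely: I would prove by induction on $|V(G)|$ that any $G$ with $\mu(G):=\max_{H\subseteq G}(|V(H)|-2\alpha(H))\le k$ satisfies $\chi(G)\le k+2$. Take $Y$ with $|Y|\le k$ and $G-Y$ half-stable; let $I$ be a maximum independent set of $G-Y$. The graph $G':=G-Y-I$ has strictly fewer vertices. If $I\neq\emptyset$ and $|I|\ge |V(G-Y)|/2$, then after also deleting $I$ the remaining graph $G-Y-I$ has $\le |V(G)|/2$ vertices roughly; more importantly $\mu(G')\le \mu(G)\le k$, so by induction $\chi(G')\le k+2$ — that's not yet enough, I need $k+1$ colours for $G-Y$. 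So the correct bookkeeping is: show $G-Y$ itself is $(k+1)$-colourable, i.e.\ reduce to the case $Y=\emptyset$, where the claim becomes: a half-stable graph $H$ with $\mu(H)\le k$ has $\chi(H)\le k+1$. Then iterate: $H$ half-stable gives a maximum independent set $I_1$ of size $\ge |V(H)|/2$; set colour $1$ on $I_1$; now $H-I_1$ has $\mu\le k$ but may fail to be half-stable, so I instead need that $\mu(H-I_1)\le k-1$ — and this is the crux inequality: removing a \emph{maximum} independent set from a graph $H$ drops the deficiency $\mu$ by at least one, unless $H$ is already null. Proving $\mu(H - I_1)\le \mu(H)-1$ for a maximum independent set $I_1$ (equivalently: every induced subgraph $F$ of $H-I_1$ satisfies $|V(F)|-2\alpha(F)\le \mu(H)-1$, using that $F\cup I_1$ is an induced subgraph of $H$ with $\alpha(F\cup I_1)\ge \alpha(F)+1$ when $I_1$ is maximum — wait, one must be careful that $I_1$ maximum in $H$ need not be maximum in $F\cup I_1$) is exactly where the argument is subtle.

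**The main obstacle.** The delicate point — and where I'd expect the whole proof to hinge — is the claim that deleting a single cleverly chosen independent set reduces the global parameter $\mu$ by at least $1$. A naively chosen maximum independent set fails: $I_1$ maximum in $H$ gives $\alpha(F\cup I_1)\ge \alpha(F)+1$ only if no vertex of $F$ can be added to $I_1$, which need not hold. The fix, I anticipate, is to choose $I_1$ to be a \emph{maximal} independent set that is moreover maximum among those, or to choose it greedily relative to a worst-case "tight" subgraph $H^*$ achieving $\mu(H)=|V(H^*)|-2\alpha(H^*)$: pick $I_1$ so that $I_1\cap V(H^*)$ is a maximum independent set of $H^*$ and $I_1$ is maximal in $H$. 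Then for the tight subgraph one gets the drop; for non-tight subgraphs there is slack to spare. Making this simultaneously work for \emph{all} subgraphs $F$ at once is the technical heart, and I'd budget most of the effort there; the rest (handling $Y$, the outer induction, assembling the $k+2$ colour classes) is routine once that lemma is in hand.
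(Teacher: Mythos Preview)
Your reformulation is right and matches the paper's: with $\rho(H)=|V(H)|-2\alpha(H)+2$ and $f(G)=\max_{H\subseteq_i G}\rho(H)$, the theorem becomes $\chi(G)\le f(G)$, and the base case $f(G)=2$ is exactly Observation~\ref{obs-bip}. But the inductive ``peeling'' strategy has a genuine gap at precisely the point you label the technical heart. What you need is an independent set $I$ with $f(G-I)\le f(G)-1$; since induced subgraphs of $G-I$ are induced subgraphs of $G$ disjoint from $I$, this is equivalent to asking for an independent set that meets every \emph{witness} of $G$. Neither of your proposed fixes secures this. Taking $I_1$ maximum (or maximal) in $G$ says nothing about how $I_1$ meets an arbitrary witness; taking $I_1$ to contain a maximum independent set of one fixed witness $H^*$ gives no control over the others. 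You acknowledge that handling ``all subgraphs $F$ at once'' is where the work lies, but no mechanism is offered---the proposal stops exactly where the theorem becomes hard. (Witnesses do pairwise intersect, since two vertex-disjoint witnesses would give an induced subgraph of potential at least $2f(G)-2>f(G)$; but an independent transversal of all witnesses does not follow from pairwise intersection, and it is not clear this ``peeling lemma'' can be proved short of proving Folkman's theorem itself.)

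The paper's proof goes a completely different way and never attempts to peel colour classes. It takes a minimum counterexample $G$ and, via vertex-identification arguments that compare $f(G')$ to $f(G)$ for carefully contracted graphs $G'$, establishes two structural facts: $G$ has no induced even cycle, and $G$ has no induced diamond (this second step is where Hajnal's Proposition~\ref{lem:hellylikestables} is invoked). Combined with a sharpened criticality $\chi(G)\ge\chi(G[X])+\chi(G-X)-1$, these force the neighbourhoods of the vertices of any maximum clique outside that clique to be pairwise disjoint and pairwise non-adjacent; a one-new-colour recolouring then shows $G$ is triangle-free, and the same recolouring trick applied around a shortest (odd, length $\ge5$) cycle gives the final contradiction. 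The argument is structural and uses contractions to compare potentials; the bookkeeping you propose on $\alpha$ under deletion of independent sets plays no role.
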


The arguments developed by Folkman to demonstrate \Cref{th:folkman} are nice
and interesting.  However, they are difficult to access and it seems they
deserve more visibility.  The essence of Folkman's argument is not obvious to
see amongst the seventeen lemmas composing its proof, which is witnessed by the
fact that a simpler proof was asked for by Claude Tardif at a workshop  at the
University of Illinois in~2008~\cite{REGS08}.  Our goal is to present a much
more compact and shorter proof of \Cref{th:folkman}, which still relies on
Folkman's original idea, but with refined, factorised and sometimes generalised
statements.

We define the \emph{potential}~$\rho(H)$ of a graph~$H$ to be
$|V(H)|-2\alpha(H)+2$. Note that if a graph has potential~$k+2$ then one
can remove~$k$ vertices (but not fewer) to obtain a half-stable graph.  Given
a graph~$G$, let~$f(G)$ be the maximum of~$\rho(H)$ over all induced
subgraphs~$H$ of~$G$.  Note that if an induced subgraph~$H$ satisfies the property
mentioned in Theorem~\ref{th:folkman}, then~$\rho(H)\le k+2$, and hence
Theorem~\ref{th:folkman} can be reformulated
as~$\chi(G)\leq f(G)$ for every graph~$G$.  An induced
subgraph~$H$ of~$G$ is a \emph{witness} if $\rho(H)=f(G)$.
A $k$-colouring of a graph~$G$ is a mapping~$\varphi\colon V(G)\to\{1,2,\dotsc,k\}$
such that no two adjacent vertices have the same image under~$\varphi$.
If~$X$ is a subset of vertices of a graph~$G$, then~$G[X]$ is the subgraph of~$G$
induced by the vertices in~$X$ and~$G-X$ is the subgraph of~$G$ induced by
the vertices not in~$X$.

To demonstrate \Cref{th:folkman}, we use a simple, seemingly unrelated result,
which is a special case of a theorem proved by Hajnal~\cite{Haj65} in~1965.
A proof is included for completeness (the formulation and the argument are different
from what is customary seen). 

\begin{proposition}[Hajnal~\cite{Haj65}]\label{lem:hellylikestables}
    If a graph~$G$ admits an independent set containing more than half of the vertices,
    then there exists a vertex contained in all maximum independent sets of~$G$.
\end{proposition}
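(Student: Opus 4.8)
The plan is to fix one maximum independent set $A$ and to understand all the others through how they meet the complement $W\eqdef V(G)\setminus A$; the leverage comes from the fact that $W$ is \emph{small}, namely $|W|=|V(G)|-\alpha(G)<\alpha(G)=|A|$, which is precisely (and only) where the hypothesis $\alpha(G)>|V(G)|/2$ enters — note that the strictness is essential. For $S\subseteq W$, let $N_A(S)$ be the set of vertices of $A$ having a neighbour in $S$. The first step is a Hall-type deficiency inequality: for every independent set $S\subseteq W$ one has $|N_A(S)|\ge|S|$. Otherwise $(A\setminus N_A(S))\cup S$ would be independent — both parts are independent, and by definition of $N_A(S)$ no vertex of $A\setminus N_A(S)$ is adjacent to $S$ — of size $|A|-|N_A(S)|+|S|>\alpha(G)$, which is impossible.

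Next I would record how maximum independent sets look relative to $A$. If $M$ is a maximum independent set and $S\eqdef M\cap W$, then $S$ is independent and $N_A(S)=A\setminus M$: a vertex of $A\setminus M$ has, by maximality of $M$, a neighbour in $M$, necessarily outside $A$ (as $A$ is independent) and hence in $S$; conversely a vertex of $A$ adjacent to $S\subseteq M$ cannot lie in $M$. Since $|M|=|A|$, this yields $|N_A(S)|=|A|-|A\cap M|=|M|-|A\cap M|=|S|$, so $S$ is \emph{tight}. Thus the vertices of $A$ omitted by some maximum independent set are exactly those of the form $N_A(S)$ for $S\subseteq W$ independent and tight.

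The heart of the matter is a combination lemma: for any finite family $\mathcal{T}$ of independent subsets of $W$ with $|N_A(T)|\le|T|$ for every $T\in\mathcal{T}$, one has $|N_A(\bigcup\mathcal{T})|\le|\bigcup\mathcal{T}|$. I would prove this by induction on $|\mathcal{T}|$: writing $\mathcal{T}=\mathcal{T}'\cup\{T\}$ and $U\eqdef\bigcup\mathcal{T}'$ (so $|N_A(U)|\le|U|$ by induction), combine the submodularity of $X\mapsto|N_A(X)|$ with the Hall-type inequality applied to $U\cap T$ — which is permissible since $U\cap T\subseteq T$ is independent — to obtain $|N_A(U\cup T)|\le|N_A(U)|+|N_A(T)|-|N_A(U\cap T)|\le|U|+|T|-|U\cap T|=|U\cup T|$. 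This is the step I expect to be the main obstacle: a union of independent subsets of $W$ need not be independent, so one cannot simply pass to a single large independent set and invoke the deficiency inequality once; what rescues the argument is that every \emph{intersection} met in the induction sits inside an independent set and therefore still obeys the Hall inequality.

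Finally I would conclude by contradiction. Suppose no vertex lies in every maximum independent set; then each $v\in A$ is omitted by some maximum independent set $M_v$, and $S_v\eqdef M_v\cap W$ is, by the second step, independent and tight with $v\in A\setminus M_v=N_A(S_v)$. Applying the combination lemma to $\{S_v:v\in A\}$ gives $|N_A(\bigcup_{v}S_v)|\le|\bigcup_{v}S_v|\le|W|<|A|$. But $N_A(\bigcup_{v}S_v)=\bigcup_{v}N_A(S_v)$ contains every $v\in A$, hence equals $A$, so $|A|<|A|$, a contradiction. Therefore some vertex lies in all maximum independent sets of $G$.
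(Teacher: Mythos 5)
Your proof is correct. It takes a genuinely different route from the paper: the paper argues by induction on the number of edges, either deleting an edge whose removal preserves $\alpha$, or else picking a vertex $x$ lying in the most maximum independent sets, deleting its closed neighbourhood, and deriving a contradiction from an edge at $x$; you instead fix one maximum independent set $A$, prove a Hall-type deficiency inequality $|N_A(S)|\ge|S|$ for independent $S\subseteq W=V(G)\setminus A$, observe that each maximum independent set $M$ produces a tight independent set $M\cap W$ with $N_A(M\cap W)=A\setminus M$, and then use submodularity of $X\mapsto|N_A(X)|$ to show that deficient sets are closed under unions (the intersections occurring in the induction stay independent, which is exactly what keeps the argument alive). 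Each approach has its merits: the paper's proof is short, purely graph-theoretic and self-contained, with the clever ``vertex in the most maximum independent sets'' trick; your argument makes completely transparent where the hypothesis $\alpha(G)>|V(G)|/2$ enters (only via $|W|<|A|$ at the very end), and in fact yields more with no extra work --- since the deficiency and combination steps never use that hypothesis, your counting shows that at most $|W|=|V(G)|-\alpha(G)$ vertices of $A$ are omitted by some maximum independent set, so at least $2\alpha(G)-|V(G)|$ vertices lie in all of them, which is essentially Hajnal's general theorem of which the proposition is stated in the paper to be a special case.
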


\begin{proof}
We proceed by induction on the number~$e$ of edges of~$G$.
The statement is true if~$e$ is zero. Assume that~$e\ge1$ and the statement is true for graphs with fewer than~$e$
edges. Note that the statement is true if~$G$ has an isolated vertex, and hence
    we may assume by induction that~$G$ is connected.  If there is an edge~$e$
    such that~$\alpha(G-e)=\alpha(G)$, then by induction a vertex~$v$ is
    contained in all maximum independent sets of~$G-e$, and hence also in all
    those of~$G$ since each of them is also a maximum independent set of~$G-e$. 
Thus it is enough to show that such an edge $e$ exists. 
Arguing by contradiction, let us assume that~$\alpha(G-e)>\alpha(G)$ for every edge~$e$ of~$G$. 

Let~$x$ be a vertex of~$G$ contained in the largest number of maximum
independent sets of~$G$.  Let~$G'$ be the graph obtained from~$G$ by
deleting~$x$ and all its neighbours. We show that the induction hypothesis can
be applied to~$G'$. Notice that adding~$x$ to any independent set of~$G'$
yields an independent set of~$G$, and hence~$\alpha(G')\le\alpha(G)-1$.  It
follows that~$\alpha(G')=\alpha(G)-1$ because~$x$ is contained in at least one
maximum independent set of~$G$.  Since~$G$ is connected and has more than one
vertex, we know that~$|V(G')|\le|V(G)|-2$, and therefore the induction
hypothesis applies to~$G'$, ensuring the existence of a vertex~$y$ of~$G'$
contained in every maximum independent set of~$G'$.  Consequently, $y$ is
contained in every maximum independent set of~$G$ that contains~$x$. The
definition of~$x$ thus ensures that a maximum independent of~$G$ contains~$x$
if and only if it contains~$y$.

To conclude, let~$z$ be a neighbour of~$x$. Since~$\alpha(G-xz)>\alpha(G)$,
there exists a maximum independent set~$I_x$ of~$G$ that contains~$x$ and such
that the only neighbour of~$z$ in~$I_x$ is~$x$. However, this implies
    that~$(I_x\setminus\{x\})\cup\{z\}$ is a maximum independent set of~$G$
containing~$y$ and not~$x$, a contradiction.
This concludes the proof.
\end{proof}

\section{Proof of \texorpdfstring{\Cref{th:folkman}}{Theorem 1.3}}\label{sec:proof} 

We proceed by contradiction. Let~$G$ be a counter-example to
Theorem~\ref{th:folkman} with the fewest vertices. In particular, $\chi(G) \geq
f(G)+1$. If $f(G)=2$, \Cref{obs-bip} implies that~$G$ is bipartite, a contradiction. It follows that $f(G) \geq 3$, and hence~$\chi(G)\ge4$. 

Note that every proper induced subgraph of~$G$ admits a $(\chi(G)-1)$-colouring. We
argue below that the graph satisfies a deeper critical property.

\begin{as}\label{lem:almost-critical}
For every subset~$X$ of vertices of~$G$,
\[\chi(G) \geq \chi(G[X])+\chi(G-X)-1.\]
\end{as}
\noindent
In particular,~\eqref{lem:almost-critical} implies that removing a
clique of size~$p$ in~$G$ results in a graph with chromatic number
either~$\chi(G)-p$ or~$\chi(G)-p+1$.

\begin{proof}[Proof of~\eqref{lem:almost-critical}]
The statement holds trivially if~$X$ or~$G-X$ is empty. Therefore,
    both~$G_1\coloneqq G[X]$ and~$G_2\coloneqq G-X$ have fewer
    vertices than~$G$. The minimality of~$G$ thus implies that none of~$G_1$
    and~$G_2$ is a counter-example to the statement of \Cref{th:folkman}.  For
    each~$i\in\{1,2\}$, let~$H_i$ be a witness of~$G_i$.  We
    have~$\rho(H_i)=f(G_i) \geq \chi(G_i)$.  We observe that $f(G) \geq
    \rho(G[V(H_1) \cup V(H_2)]) \geq \rho(H_1)+\rho(H_2)-2$, and hence~$f(G)
    \geq \chi(G_1)+\chi(G_2)-2$.  Since~$\chi(G) \geq f(G)+1$ by assumption,
the conclusion follows.
\end{proof}

\begin{as}\label{lem:evenholefree}
The graph~$G$ has no induced even cycle.
\end{as}
\begin{proof}
We proceed by contradiction. Assume that~$C$ is an induced even cycle of~$G$,
    and let~$2p$ be its length. Let~$A$ and~$B$ be the two maximum independent
    sets of~$C$.  We consider the graph~$G'$ obtained from~$G$ by merging all
    vertices of~$A$ in a single vertex~$a$ and all vertices of~$B$ in a single
    vertex~$b$, replacing every multi-edge that arises by a single edge (see
    Figure~\ref{fig:IllustrationA}).
     
Note that~$G'$ has fewer vertices than~$G$ and thus satisfies
    the conclusion of \Cref{th:folkman}.  It follows that 
    \[f(G') \geq \chi(G') \geq \chi(G) \geq f(G)+1.\]
    
    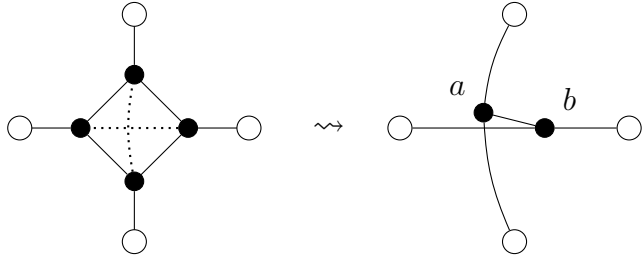
\begin{figure}[h]
  \centering
  \begin{tikzpicture}
    \tikzstyle{whitenode}=[draw,circle,fill=white,minimum size=9pt,inner sep=0pt]
    \tikzstyle{ghost}=[draw=white,circle,inner sep=0pt]
    \tikzstyle{blacknode}=[draw,circle,fill=black,minimum size=7pt,inner sep=0pt]
    \tikzstyle{rednode}=[draw,circle,fill=red,minimum size=9pt,inner sep=0pt]
    \tikzstyle{greennode}=[draw,circle,fill=green,minimum size=9pt,inner sep=0pt]
    \tikzstyle{bluenode}=[draw,circle,fill=black!20!blue,minimum size=9pt,inner sep=0pt]
    \tikzstyle{texte}=[minimum size=0pt,inner sep=0pt]
\draw (0,0) node[blacknode] (a1) {}
--++ (45:1cm) node[blacknode] (b1) {}
--++ (180-45:1cm) node[blacknode] (a2) {}
--++ (180+45:1cm) node[blacknode] (b2) {};
\draw (a1) -- (b2);
\draw (a1)
--++ (-90:0.8cm) node[whitenode] (a1n) {};
\draw (b1)
--++ (0:0.8cm) node[whitenode] (b1n) {};
\draw (a2)
--++ (90:0.8cm) node[whitenode] (a2n) {};
\draw (b2)
--++ (180:0.8cm) node[whitenode] (b2n) {};
\draw[bend left=10,dotted,thick] (a1) edge node {} (a2);
\draw[dotted,thick] (b1) -- (b2);

\draw (b1)
++ (0:1.85cm) node (x) {$\leadsto$};

\draw (a1)
++ (0:5cm) node[ghost] (ta1) {}
++ (45:1cm) node[ghost] (tb1) {}
++ (180-45:1cm) node[ghost] (ta2) {}
++ (180+45:1cm) node[ghost] (tb2) {};
\draw (ta1)
++ (-90:0.8cm) node[whitenode] (ta1n) {};
\draw (tb1)
++ (0:0.8cm) node[whitenode] (tb1n) {};
\draw (ta2)
++ (90:0.8cm) node[whitenode] (ta2n) {};
\draw (tb2)
++ (180:0.8cm) node[whitenode] (tb2n) {};

\draw (b1)
++ (3:3.89cm) node[blacknode][label=90+45:$a$] (a) {};
\draw (b1)
++ (0:4.69cm) node[blacknode][label=45:$b$] (b) {};

\draw[bend left=10] (a) edge node {} (ta2n);
\draw[bend right=10] (a) edge node {} (ta1n);
\draw (a) -- (b) -- (tb1n);
\draw (b) edge node {} (tb2n);
  \end{tikzpicture}
  \caption{An example of the reduction from~$G$ to~$G'$ for~(\ref{lem:evenholefree}).}\label{fig:IllustrationA}
\end{figure}

Let~$H'$ be a witness of~$G'$, so~$\rho(H')\geq f(G)+1$. 
    Let us consider the subgraph~$H$ of~$G$ induced by~$(V(H')\setminus\{a,b\})\cup V(C)$.
The potential~$\rho(H)$ of~$H$ can be bounded as follows. 
\begin{align}\label{eq:fG}
    f(G) &\geq \rho(H)\notag\\
    &\geq \rho(H') + |V(C)|-2 - 2 (\alpha(H)-\alpha(H'))\notag\\
    &\geq f(G) + 2p-1 - 2 (\alpha(H)-\alpha(H')).
\end{align}

Clearly, $\alpha(H)-\alpha(H') \leq \alpha(C)=p$. 
In fact, we must have $\alpha(H)-\alpha(H') \leq p-1$, as we now explain. 
    Let~$I$ be a maximum independent set of~$H$. If~$|I\cap V(C)|\le p-1$,
    then~$I\cap V(H')$ is an independent set of~$H'$ of size at least~$\alpha(H)-(p-1)$.
    Otherwise, $I\cap V(C)\in\{A,B\}$, and hence one of~$(I\setminus V(C))\cup\{a\}$
    and~$(I\setminus V(C))\cup\{b\}$ is an independent set of~$H$ of size at least~$\alpha(H)-(p-1)$.

Plugging the inequality~$\alpha(H)-\alpha(H') \leq p-1$ into~\eqref{eq:fG} yields that
\[
f(G) \geq f(G)+2p-1 - 2 (p-1) > f(G), 
\]
which is the desired contradiction. 
\end{proof}

\noindent
We call \emph{diamond} the graph obtained from the complete graph on~$4$ vertices
by deleting an edge (see the graph induced by $u,v,x,y$ in Figure~\ref{fig:IllustrationC}).
\begin{as}\label{lem:nodiamond}
No induced subgraph of~$G$ is a diamond.
\end{as}

\begin{proof}
Let~$xy$ be an edge of~$G$. Let~$A$ be the set of common neighbors of~$x$
    and~$y$, and assume for a contradiction that~$A$ does not induce a clique.
    For every two distinct vertices~$u$ and~$v$ in~$A$ such that $uv \not\in
    E(G)$, let~$G_{uv}$ be the graph obtained from~$G-\{x,y\}$ by merging the
    two vertices~$u$ and~$v$ into a vertex, once again replacing every multi-edge
    that arises by a single edge (see Figure~\ref{fig:IllustrationC}).
    
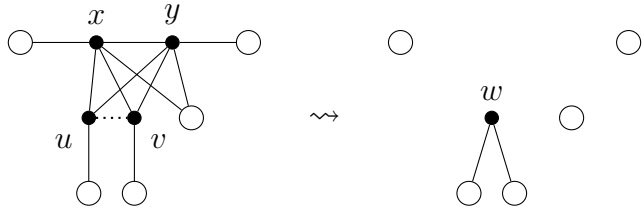
\begin{figure}[h] 
  \centering
  \begin{tikzpicture}
    \tikzstyle{whitenode}=[draw,circle,fill=white,minimum size=9pt,inner sep=0pt]
    \tikzstyle{ghost}=[draw=white,circle,inner sep=0pt]
    \tikzstyle{blacknode}=[draw,circle,fill=black,minimum size=5pt,inner sep=0pt]
    \tikzstyle{rednode}=[draw,circle,fill=red,minimum size=9pt,inner sep=0pt]
    \tikzstyle{greennode}=[draw,circle,fill=green,minimum size=9pt,inner sep=0pt]
    \tikzstyle{bluenode}=[draw,circle,fill=black!20!blue,minimum size=9pt,inner sep=0pt]
    \tikzstyle{texte}=[minimum size=0pt,inner sep=0pt]
\draw (0,0) node[blacknode][label=90:$x$] (x) {}
--++ (0:1cm) node[blacknode][label=90:$y$] (y) {};
\draw (x)
--++(-63.43:1.118cm) node [blacknode][label=-45:$v$] (v) {};
\draw (y) -- (v);

\draw (v)
++(0:0.75cm) node [whitenode] (w) {};
\draw (v)
++(180:0.6cm) node [blacknode][label=-90-45:$u$] (u) {};
\draw[dotted, thick] (u) -- (v);
\draw (x) -- (u) -- (y) -- (w) -- (x);
\draw (u)
--++(-90:1cm) node [whitenode] (un) {};
\draw (v)
--++(-90:1cm) node [whitenode] (vn) {};
\draw (x)
--++ (180:1cm) node [whitenode] (xn) {};
\draw (y)
--++ (0:1cm) node [whitenode] (yn) {};

\draw (v)
++ (0:2.5cm) node (z) {$\leadsto$};

\draw (x)
++ (0:5cm) node[ghost] (tx) {}
++ (0:1cm) node[ghost] (ty) {};
\draw (tx)
++(-63.43:1.118cm) node [ghost] (tv) {};

\draw (tv)
++(0:0.75cm) node [whitenode] (tw) {};
\draw (tv)
++(180:0.6cm) node [ghost] (tu) {};
\draw (tu)
++(-90:1cm) node [whitenode] (tun) {};
\draw (tv)
++(-90:1cm) node [whitenode] (tvn) {};
\draw (tx)
++ (180:1cm) node [whitenode] (txn) {};
\draw (ty)
++ (0:1cm) node [whitenode] (tyn) {};

\draw (u)
++ (0:5.3cm) node[blacknode][label=90:$w$] (tuv) {};
\draw (tun) -- (tuv) -- (tvn);

  \end{tikzpicture}
  \caption{An example of the reduction from $G$ to $G_{uv}$ for (\ref{lem:nodiamond}).}\label{fig:IllustrationC}
\end{figure}
\bigskip

\noindent
We prove the following statement.
\begin{sas}\label{cl:chi2}
For every two distinct vertices~$u$ and~$v$ in~$A$ that are not adjacent in~$G$,
\begin{enumerate}[(1)]
    \item\label{prop1}
    $\chi(G_{uv}) = \chi(G)-2$; and  
    \item\label{prop2}
    in every $(\chi(G)-2)$-colouring of~$G_{uv}$, all colours appear on~$A$.
\end{enumerate}
\end{sas}

\begin{proof}
Let~$w$ be the vertex resulting from the identification of~$u$ and~$v$.  We
    first prove~\eqref{prop1}.  Let~$H$ be a witness of~$G_{uv}$. If~$H$
    contains~$w$, then we let~$H'$ be the subgraph of~$G$ induced by~$(V(H) \setminus
    \{w\}) \cup \{u,v,x,y\}$.  If~$H$ does not contain~$w$, then we let~$H'$ be
    the subgraph of~$G$ induced by~$V(H) \cup \{u,x,y\}$.  In either case, we observe
    that~$\rho(H')>\rho(H)$ because~$\alpha(H')\le\alpha(H)+1$. Consequently,
    we derive that $f(G_{uv})\leq f(G)-1 \leq \chi(G)-2$.

Because~$G_{uv}$ has fewer vertices than~$G$, we have $f(G_{uv})\geq
    \chi(G_{uv})$, and hence $\chi(G_{uv}) \leq \chi(G)-2$.  On the other hand,
    $\chi(G_{uv}) \geq \chi(G)-2$ also holds (from a $\chi(G_{uv})$-colouring
    of~$G_{uv}$, it suffices to keep the same colour on the vertices belonging
    to both graphs, assign the colour of~$w$ to each of~$u$ and~$v$, and the
    colours~$\chi(G_{uv})+1$ and~$\chi(G_{uv})+2$ to~$x$ and~$y$, respectively,
    to obtain a~$(\chi(G_{uv})+2)$-colouring of~$G$).  Therefore, $\chi(G_{uv})
    = \chi(G)-2$.

Let us now prove~\eqref{prop2}.  Suppose, on the contrary, that there is a 
$(\chi(G)-2)$-colouring~$\varphi$ of~$G_{uv}$ contradicting~\eqref{prop2},
    and let~$c\in\varphi(V(G_{uv}))\setminus\varphi(A)$.  We derive from~$\varphi$ a
    $(\chi(G)-1)$-colouring of~$G$, which is a contradiction.
    Set~$c'=\chi(G)-1$, so~$c'\notin\varphi(V(G_{uv}))$.  We
    define~$\phi$ to be the colouring of~$G$ obtained from~$\varphi$ by
    colouring~$u$ and~$v$ with~$\varphi(w)$, colouring~$x$ with~$c$ and~$y$
    with~$c'$, and changing the colour of every neighbour of~$x$ that belongs
    to~${\varphi}^{-1}(\{c\})$ to~$c'$.  As~${\varphi}^{-1}(\{c\})$
    is an independent set disjoint from~$A$, and all common neighbours of~$x$ and~$y$
    belong to~$A$, we infer that~$\phi$ is a proper $(\chi(G)-1)$-colouring
    of~$G$.\cqed
\end{proof}
 
We now consider the graph~$G'$ obtained from~$G-\{x,y\}$ by creating a
    vertex~$z$ with neighbourhood~$A$.  We observe that the existence of a
    $(\chi(G)-2)$-colouring of~$G'$ would contradict~\eqref{cl:chi2}.
    Indeed,~\eqref{cl:chi2} implies that~$|A|\ge\chi(G)-1$. It follows that
    if~$\varphi$ is a $(\chi(G)-2)$-colouring of~$G'$, then there exist two
    vertices~$u$ and~$v$ in~$A$ such that~$\varphi(u)=\varphi(v)$.
    Consequently,~$u$ and~$v$ are not adjacent in~$G$ and thus~$\varphi$ readily
    yields a $(\chi(G)-2)$-colouring of~$G_{uv}$ such that one colour,
    namely~$\varphi(z)$, does not appear on~$A$.  Therefore,~$\chi(G')\geq
    \chi(G)-1$, and hence~$f(G')\geq \chi(G') \geq f(G)$.

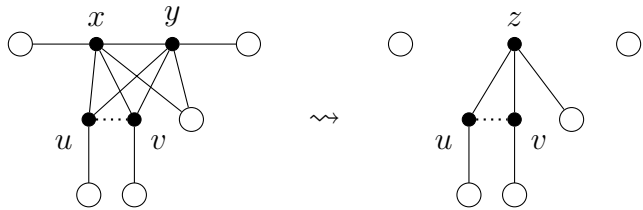
\begin{figure}[h] 
  \centering
  \begin{tikzpicture}
    \tikzstyle{whitenode}=[draw,circle,fill=white,minimum size=9pt,inner sep=0pt]
    \tikzstyle{ghost}=[draw=white,circle,inner sep=0pt]
    \tikzstyle{blacknode}=[draw,circle,fill=black,minimum size=5pt,inner sep=0pt]
    \tikzstyle{rednode}=[draw,circle,fill=red,minimum size=9pt,inner sep=0pt]
    \tikzstyle{greennode}=[draw,circle,fill=green,minimum size=9pt,inner sep=0pt]
    \tikzstyle{bluenode}=[draw,circle,fill=black!20!blue,minimum size=9pt,inner sep=0pt]
    \tikzstyle{texte}=[minimum size=0pt,inner sep=0pt]
\draw (0,0) node[blacknode][label=90:$x$] (x) {}
--++ (0:1cm) node[blacknode][label=90:$y$] (y) {};
\draw (x)
--++(-63.43:1.118cm) node [blacknode][label=-45:$v$] (v) {};
\draw (y) -- (v);

\draw (v)
++(0:0.75cm) node [whitenode] (w) {};
\draw (v)
++(180:0.6cm) node [blacknode][label=-90-45:$u$] (u) {};
\draw[dotted, thick] (u) -- (v);
\draw (x) -- (u) -- (y) -- (w) -- (x);
\draw (u)
--++(-90:1cm) node [whitenode] (un) {};
\draw (v)
--++(-90:1cm) node [whitenode] (vn) {};
\draw (x)
--++ (180:1cm) node [whitenode] (xn) {};
\draw (y)
--++ (0:1cm) node [whitenode] (yn) {};

\draw (v)
++ (0:2.5cm) node (z) {$\leadsto$};

\draw (x)
++ (0:5cm) node[ghost] (tx) {}
++ (0:1cm) node[ghost] (ty) {};
\draw (tx)
++(-63.43:1.118cm) node [blacknode][label=-45:$v$] (tv) {};

\draw (tv)
++(0:0.75cm) node [whitenode] (tw) {};
\draw (tv)
++(180:0.6cm) node [blacknode][label=-90-45:$u$] (tu) {};
\draw (tu)
--++(-90:1cm) node [whitenode] (tun) {};
\draw (tv)
--++(-90:1cm) node [whitenode] (tvn) {};
\draw (tx)
++ (180:1cm) node [whitenode] (txn) {};
\draw (ty)
++ (0:1cm) node [whitenode] (tyn) {};

\draw (x)
++ (0:5.5cm) node[blacknode][label=90:$z$] (txy) {};
\draw (tu) -- (txy) -- (tv);
\draw (txy) -- (tw);
\draw[dotted, thick] (tu) -- (tv);
  \end{tikzpicture}
  \caption{An example of the reduction from~$G$ to~$G'$ for~(\ref{lem:nodiamond}).}\label{fig:IllustrationC2}
\end{figure}    

Let~$H'$ be a witness of~$G'$, so~$\rho(H')\geq f(G)$. If~$z \in V(H')$, then
    the potential of the subgraph~$H_0$ of~$G$ induced by~$(V(H') \setminus \{z\}) \cup
    \{x,y\}$ is larger than that of~$H'$ because~$\alpha(H_0)\le\alpha(H')$. This
    is a contradiction since~$f(G)\le f(G')=\rho(H')$. In particular,~$H'$ is
    thus an induced subgraph of~$G-\{x,y\}$ and consequently we deduce
    that~$f(G)=\rho(H')$.  We also deduce that the potential of the subgraph
    of~$G'$ induced by~$V(H') \cup \{z\}$ is less than that of~$H'$, so there
    exists a maximum independent set of~$H'$ that is disjoint from~$A$.  If
    there is a vertex~$w \in A\setminus V(H')$, then the potential of the
    subgraph of~$G$ induced by~$V(H') \cup \{w,x,y\}$ is larger than that
    of~$H'$, a contradiction.
    Therefore~$A \subseteq V(H')$, and~$z \not\in V(H')$.

Let~$U = V(H') \setminus A$. We note that $|U|=|V(H')|-|A|$.
    From~\eqref{cl:chi2}, we derive that $|A|\geq \chi(G)-1 \geq f(G)=\rho(H')$.
    Therefore, $|U|\leq 2 \alpha(H')-2$. Set~$G_U = H'-A$, so~$G_U$ is
    the subgraph of~$G$ induced by~$U$.
It follows from the previous paragraph
    that~$\alpha(G_U)=\alpha(H')$, which is greater than~$\frac12\cdot|U|$.
    Proposition~\ref{lem:hellylikestables} therefore implies the existence of a
    vertex~$w$ in~$U$ such that every independent set of~$G_U$ that has
    size~$\alpha(H')$ contains~$w$.  

We consider the subgraph~$H$ of~$G$ induced by~$(V(H') \setminus \{w\}) \cup
    \{x,y\}$.  Since its potential is at most~$f(G)$, there is a maximum
    independent set~$I$ in~$H$ of size~$\alpha(H')+1$. We observe that~$I$
    contains exactly one of~$\{x,y\}$, say~$x$ without loss of generality. We
    derive that~$I \setminus \{x\}$ is disjoint from~$A$, which is contained in
    the neighbourhood of~$x$. We note that $I \setminus \{x\}$ is an
    independent set of size $\alpha(H')=\alpha(G_U)$ that is contained in~$U$
    and does not contain~$w$, a contradiction.
\end{proof}

Equipped with all these properties, we may now finish the proof of
Theorem~\ref{th:folkman}. We start by proving that~$G$ has no triangle.
Choose a maximum clique~$K$ of~$G$, and let~$\omega$
be its size. We let~$u_1,u_2, \dotsc, u_\omega$ be the vertices of~$K$ and we
set~$G_K = G-K$.  For every~$i\in \{1,2,\dotsc,\omega\}$, we let~$N_i$ be
the neighbourhood of~$u_i$ in~$G_K$. By~\eqref{lem:nodiamond}, and because~$K$
is maximum, the sets~${(N_i)}_{1\le i\le \omega}$ are pairwise disjoint.
Moreover,~\eqref{lem:evenholefree} implies that they are pairwise non-adjacent,
that is, if~$G$ has an edge with one end-vertex in~$N_i$ and the other
in~$N_j$, then~$i=j$.

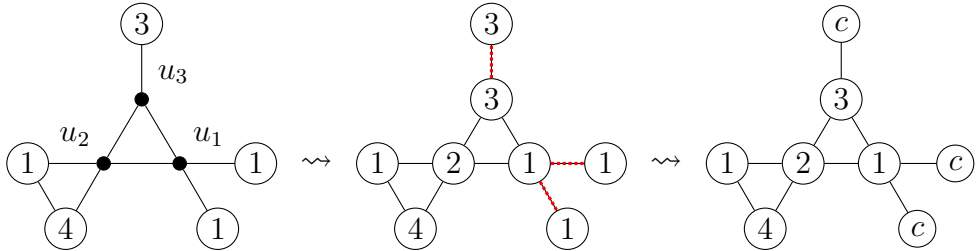
\begin{figure}[h] 
  \centering
  \begin{tikzpicture}
    \tikzstyle{whitenode}=[draw,circle,fill=white,minimum size=9pt,inner sep=0pt]
    \tikzstyle{ghost}=[draw=white,circle,inner sep=0pt]
    \tikzstyle{blacknode}=[draw,circle,fill=black,minimum size=5pt,inner sep=0pt]
    \tikzstyle{rednode}=[draw,circle,fill=red,minimum size=9pt,inner sep=0pt]
    \tikzstyle{greennode}=[draw,circle,fill=green,minimum size=9pt,inner sep=0pt]
    \tikzstyle{bluenode}=[draw,circle,fill=black!20!blue,minimum size=9pt,inner sep=0pt]
    \tikzstyle{texte}=[minimum size=0pt,inner sep=0pt]
\draw (0,0) node[blacknode][label=110:$u_2$] (v2) {}
--++ (0:1cm) node[blacknode][label=70:$u_1$] (v1) {};
\draw (0.5,0.85) node [blacknode][label=45:$u_3$] (v3) {};
\draw (v1) -- (v3) -- (v2);
\draw (v3)
--++ (90:1cm) node[whitenode] (nv3) {$\ 3\ $};
\draw (v1)
--++ (0:1cm) node[whitenode] (n1v1) {$\ 1\ $};
\draw (v1)
--++ (-60:1cm) node[whitenode] (n2v1) {$\ 1\ $};
\draw (v2)
--++ (180:1cm) node[whitenode] (n1v2) {$\ 1\ $};
\draw (v2)
--++ (180+60:1cm) node[whitenode] (n2v2) {$\ 4\ $};
\draw (n1v2) -- (n2v2);

\draw (v1)
++ (0:1.8cm) node (z) {$\leadsto$};

\draw (4.6,0) node[whitenode] (dv2) {$\ 2\ $}
--++ (0:1cm) node[whitenode] (dv1) {$\ 1\ $};
\draw (0.5+4.6,0.85) node [whitenode] (dv3) {$\ 3\ $};
\draw (dv1) -- (dv3) -- (dv2);
\draw (dv3)
--++ (90:1cm) node[whitenode] (dnv3) {$\ 3\ $};
\draw (dv1)
--++ (0:1cm) node[whitenode] (dn1v1) {$\ 1\ $};
\draw (dv1)
--++ (-60:1cm) node[whitenode] (dn2v1) {$\ 1\ $};
\draw (dv2)
--++ (180:1cm) node[whitenode] (dn1v2) {$\ 1\ $};
\draw (dv2)
--++ (180+60:1cm) node[whitenode] (dn2v2) {$\ 4\ $};
\draw[densely dotted, very thick,red] (dv3) -- (dnv3);
\draw[densely dotted, very thick,red] (dv1) -- (dn1v1);
\draw[densely dotted, very thick,red] (dv1) -- (dn2v1);

\draw (dv1)
++ (0:1.8cm) node (dz) {$\leadsto$};

\draw (9.2,0) node[whitenode] (tv2) {$\ 2\ $}
--++ (0:1cm) node[whitenode] (tv1) {$\ 1\ $};
\draw (0.5+9.2,0.85) node [whitenode] (tv3) {$\ 3\ $};
\draw (tv1) -- (tv3) -- (tv2);
\draw (tv3)
--++ (90:1cm) node[whitenode] (tnv3) {$\ c\ $};
\draw (tv1)
--++ (0:1cm) node[whitenode] (tn1v1) {$\ c\ $};
\draw (tv1)
--++ (-60:1cm) node[whitenode] (tn2v1) {$\ c\ $};
\draw (tv2)
--++ (180:1cm) node[whitenode] (tn1v2) {$\ 1\ $};
\draw (tv2)
--++ (180+60:1cm) node[whitenode] (tn2v2) {$\ 4\ $};

\draw (dn1v2) -- (dn2v2);
\draw (tn1v2) -- (tn2v2);
  \end{tikzpicture}
  \caption{An example of the colouring extension when $\omega(G) \geq 3$.}\label{fig:IllustrationClique}
\end{figure}  

Suppose, for a contradiction, that~$\omega\ge3$.
We deduce from~\eqref{lem:almost-critical} that $\chi(G_K) \leq \chi(G)-2$.
Let~$\varphi$ be a $(\chi(G)-2)$-colouring of~$G_K$.  We argue how to obtain
from~$\varphi$ a $(\chi(G)-1)$-colouring of~$G$, which would be a contradiction.
Set~$c = \chi(G)-1$, so~$c\notin\varphi(V(G_K))$. Colour every
vertex $v$ not in~$K$ with~$\varphi(v)$.  For each~$i\in\{1,2,\dotsc,\omega\}$, colour~$u_i$
with colour~$i$, and next recolour every vertex in~$N_i\cap{\varphi}^{-1}(\{i\})$
with colour~$c$.  Since the sets~${(N_i)}_{1\le i\le \omega}$ are pairwise
non-adjacent and we recolour an independent set in each set~$N_i$, we obtain a
proper colouring of~$G$. And since~$\chi(G)-1\ge\omega$, because~$\chi(G)>f(G)\ge\omega$,
this colouring uses less than~$\chi(G)$ colours, a contradiction.
Therefore, the graph~$G$ contains no triangle.

Consider a shortest cycle~$C$ in~$G$ (there is one since $\chi(G)> f(G) \geq
2$). By~\eqref{lem:evenholefree} and the previous argument, we have
$|V(C)|=2p+1$ where $p \geq 2$. Let $v_1,v_2,\dotsc, v_{2p+1}$ be the vertices
on~$C$, consecutively. Let~$N'_i$ be the set of neighbours of~$v_i$ not in~$C$.
Note that every vertex not in~$C$ has at most one neighbour in~$C$, for
otherwise~$G$ would contain either an odd cycle shorter than~$C$ or an induced
even cycle, thereby contradicting~\eqref{lem:evenholefree}.  Consequently, the
sets~${(N'_i)}_{1\le i\le 2p+1}$ are pairwise disjoint.  It also follows
from~\eqref{lem:evenholefree} that the sets~${(N'_i)}_{1\le i\le 2p+1}$ are
pairwise non-adjacent. 

Let~$G_C = G-C$.
By~\eqref{lem:almost-critical}, we obtain a $(\chi(G)-2)$-colouring~$\varphi$
of~$G_C$. Similarly as before, we argue how to deduce from~$\varphi$ a
$(\chi(G)-1)$-colouring~$\phi$ of~$G$, hence obtaining the final
contradiction. Recall that~$\chi(G)\ge4$.  For~$i \in \{1,2,\dotsc,p\}$, we
colour~$v_{2i}$ with colour~$1$ and~$v_{2i-1}$ with colour~$2$. We
colour~$v_{2p+1}$ with colour~$3$.  Set~$c = \chi(G)-1$,
so~$c\notin\varphi(V(G_C))$.  For each~$i\in\{1,2,\dotsc,2p+1\}$, every vertex
in~$N'_i\cap {\varphi}^{-1}(\phi(v_i))$ is coloured with colour~$c$. We
define~$\phi$ to be equal to~$\varphi$ on all the remaining vertices of~$G$.
Similarly as before, the properties of the sets~${(N'_i)}_{1\le i\le 2p+1}$
ensure that~$\phi$ is a proper~$(\chi(G)-1)$-colouring of~$G$.  This
contradiction concludes the proof of the theorem.  

\section{Conclusion} 

With now a clear understanding of why Theorem~\ref{th:folkman} holds, it is tempting to look for a strengthening of the theorem. We already discussed in Section~\ref{sec:intro} some natural generalisations of the statement that unfortunately do not hold. 
In this section, we conclude this work with a discussion of other potential generalisations. 
Let us first restate Theorem~\ref{th:folkman} as follows, where the notation~$H \subseteq_i G$ means that~$H$ is an induced subgraph of~$G$. 

\begin{usethmcounterof}{th:folkman}[Folkman~\cite{Fol69}]
For every graph~$G$,
    \begin{equation}\label{eq-folkman}
\chi(G) \leq \max_{H \subseteq_i G} ( |V(H)|-2\cdot(\alpha(H)-1)).
    \end{equation}
\end{usethmcounterof}

The parameter~$\alpha(H)$ can be interpreted as ``the size of the largest induced subgraph of~$H$ with chromatic number~$1$''. 
This suggests a new approach for a generalisation. For every positive integer~$p$, define~$\alpha_p(H)$ as the size of a largest induced subgraph of~$H$ with chromatic number at most~$p$.

\begin{question}\label{que-HigherChi}
Given a positive integer~$p$, does there exist~$c_p > 1$ such that for every graph~$G$,
\[\chi(G) \leq \max_{H \subseteq_i G} \big( |V(H)|-c_p\cdot(\alpha_p(H)-p)\big)?\]
\end{question}

Note that the inequality holds for~$c_p \leq 1$ by taking~$H=G$. Theorem~\ref{th:folkman} answers the case~$p=1$ of Question~\ref{que-HigherChi} in the affirmative, showing that~$c_1$ can even be taken as high as~$2$. 
Before considering larger values of~$p$, let us point out that~$c_1=2$ is best possible. 
Indeed, suppose that~$c_1>2$ and take~$G$ to be the odd cycle~$C_5$.
Because~$\chi(C_5)=3$ and~$c_1>2$, the maximum in the right side of~\eqref{eq-folkman} must
be attained by~$H$ being the null graph (that is, $V(H)=\varnothing$), implying that $c_1
\geq 3$.  However, the Mycielski graph~$M_{c'_1}$, where~$c'_1 =
\lfloor c_1 \rfloor$, now contradicts the desired inequality.
Since~$\chi(M_{c'_1})=c'_1+1$, taking~$H$ to be the null graph is not
sufficient to ensure the inequality. Moreover, as~$c'_1\ge3$ we know
that~$\mir(M_{c'_1})\ge\frac1{c'_1}$ (as mentioned in the introduction), and
hence~$\alpha(H)\ge|V(H)|/c'_1$ whenever~$H$ is a non-null induced subgraph
of~$M_{c'_1}$. It follows that, in this case, $|V(H)|-c_1(\alpha(H)-1) \leq
|V(H)|-c'_1(\alpha(H)-1)\le c'_1<\chi(M_{c'_1})$, a contradiction.  This shows
that~$c_1=2$ is best possible. 

For larger values of~$p$, it turns out that the answer to
Question~\ref{que-HigherChi} is always negative. First note that since
$\alpha_{p+1}(H) \geq \alpha_p(H)+1$ in any graph~$H$ with chromatic number
higher than~$p$, a negative answer for a positive integer~$p_0$ implies a negative answer for
every integer~$p\ge p_0$. We now explain why the answer is negative for~$p=2$.

For simplicity, we define~$f_2(G)$ to be 
\[
\max_{H \subseteq_i G} \big( |V(H)|-c_2\cdot(\alpha_2(H)-2)\big). 
\]
Let~$\ell \geq 2$.  For~$k\geq 2$, let~$M'_{k,\ell}$ be obtained by
applying~$k-2$ times the Mycielski construction to the cycle on~$2\ell+1$
vertices. Note that~$M'_{k,2}$ is the standard Mycielski graph~$M_k$. 

Suppose by contradiction that~$c_2>1$. 
We prove by induction on~$k$ that for every~$k \geq 1$, we have~$c_2 \geq \frac{k+1}2$. 
For~$k=1$, this is true by our assumption that~$c_2>1$. 
For~$k\geq 2$, we consider~$f_2(M'_{k,\ell})$ for~$\ell$ large enough in terms of~$k$. 

If the maximum in the definition of~$f_2(M'_{k,\ell})$ is attained on the null
graph, then we obtain~$2 c_2\geq \chi(M'_{k,\ell})=k+1$ as desired.  Thus we
may assume this is not the case.  If the maximum is attained on a proper
non-null induced subgraph~$H$, then~$H$ is $k$-colourable and thus
satisfies~$\alpha_2 (H) \geq \frac{2 |V(H)|}{k}$.  Since~$H$ gives a higher
value than that given by the null graph in the definition
of~$f_2(M'_{k,\ell})$, we deduce that 
\[
|V(H)|-c_2\cdot(\alpha_2(H)-2) > 2c_2. 
\]
However, using that~$c_2 \geq \frac{k}{2}$ (by induction), we obtain~$\alpha_2
(H) < \frac{2 |V(H)|}{k}$, a contradiction. 

It follows that the maximum in the definition of~$f_2(M'_{k,\ell})$ is attained by~$M'_{k,\ell}$ itself. 
Observe that $|M'_{k,\ell}|=(\ell+1)\cdot 2^{k-1}-1$ and $\alpha_2 (M'_{k,\ell})=\ell \cdot 2^{k-1}$. 
Consequently, \[
f_2(M'_{k,\ell})=
(\ell+1)\cdot 2^{k-1}-1
- c_2 \cdot (\ell \cdot 2^{k-1}-2)
\geq \chi(M'_{k,\ell}) = k+1,  
\]
which does not hold for~$\ell$ large enough in terms of~$k$, since $c_2>1$. 
Therefore, this last case cannot occur either, and~$c_2 \geq \frac{k+1}2$ holds, as announced.

In summary, we have shown that~$c_2>1$ implies that~$c_2 \geq \frac{k+1}2$ for
every positive integer~$k$, and hence that~$c_2$ is unbounded, a contradiction.  We
conclude that~$c_2 \leq 1$ must hold.

Given this state of affairs, it was suggested to one of the
authors to try and replace~$\alpha_2(H)$ with the maximum size of an induced
subgraph of~$H$ isomorphic to a bipartite cograph.  However, the answer remains
negative even when considering the maximum size of an induced subgraph of~$H$
isomorphic either to a complete bipartite graph with one side of size at
most~$2$ or to~$P_1+P_2$, the graph on three vertices with only one edge. To
see this, it suffices to apply Mycielski's construction once on a large clique,
and to do the arithmetic for various cases depending on the structure of a
subgraph attaining maximum potential (whether it contains more vertices of the
clique than of the maximum stable set, and whether it contains the unique
vertex whose neighbourhood is a stable set). While this supersedes the above
argument that~$c_2 \leq 1$, the proof is decidedly non-illuminating, and we
refrain from including it here. Considering how restricted the replacement
for~$\alpha_2(H)$ is here, generalising Theorem~\ref{th:folkman} in this
direction does not seem feasible either.

Our original motivations for finding a short proof of Theorem~\ref{th:folkman}
were to make sense of the statement and understand what bigger truth it could be
part of. With the new-found insight, Theorem~\ref{th:folkman} seems all the
more to be a truly isolated statement, almost a singularity in the realm of
graph colouring.

\section*{Acknowledgements} 
This research was started at the Graph Theory meeting in Oberwolfach in
January~2019. Thanks to the organizers and to the other workshop participants
for creating a productive working atmosphere. 
Thanks to Paul Seymour for stimulating discussions on this topic. 



\begin{thebibliography}{REG08} 

\bibitem[AKS80]{AKS80}
Mikl\'{o}s Ajtai, J\'{a}nos Koml\'{o}s, and Endre Szemer\'{e}di.
\newblock A note on {R}amsey numbers.
\newblock {\em J. Combin. Theory Ser. A}, 29(3):354--360, 1980.

\bibitem[AKS81]{AKS81}
Mikl\'{o}s Ajtai, J\'{a}nos Koml\'{o}s, and Endre Szemer\'{e}di.
\newblock A dense infinite {S}idon sequence.
\newblock {\em European J. Combin.}, 2(1):1--11, 1981.

\bibitem[CGL06]{CGL06}
Mathew Cropper, Andr\'{a}s Gy\'{a}rf\'{a}s, and Jen\H{o} Lehel.
\newblock Hall ratio of the {M}ycielski graphs.
\newblock {\em Discrete Math.}, 306(16):1988--1990, 2006.

\bibitem[EH68]{ErHa66}
P\'al {Erd\H{o}s} and Andr\'as {Hajnal}.
\newblock Problem 3.
\newblock {Theory of Graphs, Proc. Colloq. Tihany, Hungary 1966, page 362},
  1968.

\bibitem[Fol70]{Fol69}
Jon~H. Folkman.
\newblock An upper bound on the chromatic number of a graph.
\newblock In {\em Combinatorial theory and its application, II, (Proc. Colloq.,
  Balatonf{\"u}red, 1969)}, pages 437--457. North-Holland, Amsterdam, 1970.

\bibitem[Gri83]{Gri83}
Jerrold~R. Griggs.
\newblock An upper bound on the {R}amsey numbers {$R(3,\,k)$}.
\newblock {\em J. Combin. Theory Ser. A}, 35(2):145--153, 1983.

\bibitem[Gy{\'a}97]{Gya97}
Andr\'{a}s Gy{\'a}rf{\'a}s.
\newblock Fruit salad.
\newblock {\em Electron. J. Combin.}, 4(1):Research Paper 8, 1997.

\bibitem[Haj65]{Haj65}
Andr\'as Hajnal.
\newblock A theorem on {$k$}-saturated graphs.
\newblock {\em Canad. J. Math.}, 17:720--724, 1965.

\bibitem[Kim95]{Kim95}
Jeong~Han Kim.
\newblock The {R}amsey number {$R(3,t)$} has order of magnitude {$t^2/\log t$}.
\newblock {\em Random Structures Algorithms}, 7(3):173--207, 1995.

\bibitem[LPU95]{LPU95}
Michael Larsen, James Propp, and Daniel Ullman.
\newblock The fractional chromatic number of {M}ycielski's graphs.
\newblock {\em J. Graph Theory}, 19(3):411--416, 1995.

\bibitem[Ree99]{Ree99}
Bruce Reed.
\newblock Mangoes and blueberries.
\newblock {\em Combinatorica}, 19(2):267--296, 1999.

\bibitem[REG08]{REGS08}
{Research Experiences for Graduate Students~(REGS)}, 2008.
\newblock University of Illinois. See
  \url{https://faculty.math.illinois.edu/~west/regs/folkman.html}.

\bibitem[She91]{She91}
James~B. Shearer.
\newblock A note on the independence number of triangle-free graphs. {II}.
\newblock {\em J. Combin. Theory Ser. B}, 53(2):300--307, 1991.

\end{thebibliography}
\end{document}